\let\proof\relax 
\let\endproof\relax
\newtheorem{theorem}{Theorem}
\newtheorem{definition}{Definition}
\newtheorem{remark}{Remark}
\title{\LARGE \bf 
Counter-Adversarial Learning with Inverse Unscented Kalman Filter
}
\author{Himali Singh, Kumar Vijay Mishra and Arpan Chattopadhyay
\thanks{$^\ast$K. V. M. and A. C. have made equal contributions.}%
\thanks{H. S. and A. C. are with the Electrical Engineering Department, Indian Institute of Technology Delhi, India. {\tt\small\{eez208426,arpanc\}@ee.iitd.ac.in}} 
\thanks{K. V. M. is with the United States DEVCOM Army Research Laboratory, Adelphi, MD 20783 USA. {\tt\small kvm@ieee.org}}%
\thanks{A. C. acknowledges support via the faculty seed grant, professional development fund and professional development  allowance from IIT Delhi, and the  seed grant and grant no. RP04215G from I-Hub Foundation for Cobotics. H. S. acknowledges support via Prime Minister Research Fellowship. K. V. M. acknowledges support from the National Academies of Sciences, Engineering, and Medicine via Army Research Laboratory Harry Diamond Distinguished Fellowship.}%
}
\begin{document}

\maketitle
\thispagestyle{empty}
\pagestyle{empty}

\begin{abstract}
In counter-adversarial systems, to infer the strategy of an intelligent adversarial agent, the defender agent needs to cognitively sense the information that the adversary has gathered about the latter. Prior works on the problem employ linear Gaussian state-space models and solve this inverse cognition problem by designing inverse stochastic filters. However, in practice, counter-adversarial systems are generally highly nonlinear. In this paper, we address this scenario by formulating inverse cognition as a nonlinear Gaussian state-space model, wherein the adversary employs an unscented Kalman filter (UKF) to estimate the defender's state with reduced linearization errors. To estimate the adversary's estimate of the defender, we propose and develop an \textit{inverse UKF} (IUKF) system. We then derive theoretical guarantees for the stochastic stability of IUKF in the mean-squared boundedness sense. Numerical experiments for multiple practical applications show that the estimation error of IUKF converges and closely follows the \textit{recursive} Cram\'{e}r-Rao lower bound.
\end{abstract}

\section{Introduction}
Complex and dynamic environments in many engineering applications require intelligent and autonomous agents that \textit{cognitively} sense their environment, acquire the relevant information, and then use it to adapt in real-time to environmental changes for an improved performance \cite{haykin2006cognitive,guerci2010cognitive}. In this context, \textit{inverse cognition} --- wherein a \textit{defender} agent learns the information about itself sensed by a cognitive \textit{attacker} or \textit{adversarial} agent --- has recently gathered significant research interest \cite{krishnamurthy2020identifying}. The problem is motivated by the need to design counter-autonomous adversarial systems. For instance, a cognitive radar estimates its target's kinematic state and then adapts its waveform and processing for enhanced target detection \cite{mishra2017performance} and tracking \cite{bell2015cognitive}. An intelligent target observes the radar's adaptive behavior and predicts the latter's future actions in a Bayesian sense. Similar examples abound in interactive learning, autonomous sensor calibration, fault diagnosis, and cyber-physical security \cite{mattila2020hmm,krishnamurthy2019how}.

In inverse cognition, it is imperative to first identify if the adversary is cognitive. To this end, \cite{krishnamurthy2020identifying} developed stochastic revealed preferences-based algorithms to ascertain if the adversary optimizes a utility function and if so, estimate that function. Further, \cite{krishnamurthy2019how} modeled this problem as an inverse Bayesian filtering problem. A Bayesian filter provides a posterior distribution for an underlying state given its noisy observations. Its so-called inverse then reconstructs this posterior distribution given the actual state and noisy measurements of the posterior \cite{krishnamurthy2019how}. These types of inverse problems may be traced to \cite{kalman1964linear} to find the cost criterion for a given control policy. Later, analogous formulations appeared in revealed preferences problem of microeconomics theory \cite{varian1992microeconomic} 
and inverse reinforcement learning (IRL) \cite{choi2012nonparametric,levine2011nonlinear}. In particular, the inverse cognition framework is a generalization of IRL, where the adversary's reward function is learned passively \cite{ng2000algorithms}, to an active learning application.

Among the Bayesian filters for inverse cognition, \cite{mattila2020hmm} proposed \textit{inverse hidden Markov model} to estimate the attacker's observations and observation likelihood given noisy measurements of its posterior. This was later extended to a linear Gaussian state-space model \cite{krishnamurthy2019how}, where the attacker employed Kalman filter (KF) to estimate the defender's state and the latter estimated the \textit{former's estimate of the defender} using an inverse Kalman filter (IKF). However, in practice, counter-adversarial systems are highly non-linear. In our recent work \cite{singh2022inverse}, we proposed an inverse extended Kalman filter (IEKF) for non-linear system dynamics. Later, we developed stability guarantees for IEKF and its variants in \cite{singh2022inverse_part1}. Further, in \cite{singh2022inverse_part2}, we addressed the case of unknown forward filter and system using a new reproducing kernel Hilbert space (RKHS)-based EKF to jointly learn the system parameters and then estimate the state.

While extended Kalman filter (EKF) is a popular choice for non-linear filtering applications, difficulties in its online implementation and unreliable performance have led to the development of unscented Kalman filter (UKF) \cite{julier2004unscented}. Contrary to the EKF that linearizes the system model at the state estimates and often fails when high non-linearities are present, the UKF employs the unscented transform \cite{julier1995new} and propagates a deterministic set of `sigma points' through the non-linear system. By avoiding computation of the Jacobian matrices, the UKF approximates a Gaussian random variable's posterior mean and covariance under non-linear transformation accurately up to the third-order for any non-linearity. Note that the EKF achieves only first-order accuracy with the same computational complexity \cite{wan2000unscented}. The sufficient conditions for the stochastic stability of UKF have been discussed in the literature for only linear measurements \cite{xiong2006performance_ukf}, where the estimation error was exponentially bounded in a mean-squared sense subject to mild system conditions. 

In this paper, given the widespread use and superior performance of UKF, we propose and develop inverse UKF (IUKF) that a counter-adversarial system may employ to infer the adversary's estimate in a general non-linear setting. 
In this framework, the adversary employs UKF as the forward Bayesian filter to estimate the defender's state. Note that the IUKF is different from the \textit{inversion of UKF} \cite{zhengyu2021iterated}, which estimates the input based on the output. Clearly, this inversion of the UKF does not necessarily take the same mathematical form as the UKF, is employed on the adversary's side, and is inapplicable to our inverse cognition problem. Our IUKF is a different formulation that is focused on estimating the inference of an adversary that also employs a UKF to estimate the defender's state. 


We also derive sufficient conditions under which the proposed IUKF achieves stochastic stability. 
In the process, we also obtain hitherto unreported stochastic stability results for the forward UKF under non-linear measurements. The IUKF's error dynamics depend on the set of sigma points chosen by the forward filter and, therefore, the derivation of these theoretical guarantees is non-trivial, especially the bounds on the Jacobian matrices of IUKF's functions. We validate our models and methods for various non-linear systems through numerical experiments 
using recursive Cram\'{e}r-Rao lower bound (RCRLB) \cite{tichavsky1998posterior} as a benchmark.

Throughout the paper, we reserve boldface lowercase and uppercase letters for vectors (column vectors) and matrices, respectively. The notation $[\mathbf{a}]_{i}$ is used to denote the $i$-th component of vector $\mathbf{a}$ and $[\mathbf{A}]_{i,j}$ denotes the $(i,j)$-th component of matrix $\mathbf{A}$, with $[\mathbf{A}]_{(:,j)}$ denoting the $j$-th column of the matrix. The transpose operation is $(\cdot)^{T}$; the $l_{2}$ norm of a vector is $\|\cdot\|_{2}$; and the notation $\textrm{Tr}(\mathbf{A})$ and $\|\mathbf{A}\|$, respectively, denote the trace and spectral norm of $\mathbf{A}$. For matrices $\mathbf{A}$ and $\mathbf{B}$, the inequality $\mathbf{A}\preceq\mathbf{B}$ means that $\mathbf{B}-\mathbf{A}$ is a positive semidefinite (p.s.d.) matrix. For a function $f:\mathbb{R}^{n}\rightarrow\mathbb{R}^{m}$, $\frac{\partial f}{\partial\mathbf{x}}$ denotes the $\mathbb{R}^{m\times n}$ Jacobian matrix. Also, $\mathbf{I}_{n}$ and $\mathbf{0}_{n\times m}$ denote a `$n\times n$' identity matrix and a `$n\times m$' all zero matrix, respectively. We denote the Cholesky decomposition of matrix $\mathbf{A}$ as $\mathbf{A}=\sqrt{\mathbf{A}}\sqrt{\mathbf{A}}^{T}$.

\section{System Model}
\label{sec:inverse UKF}
Consider the defender's discrete-time state process $\lbrace\mathbf{x}_{k}\rbrace_{k\geq 0}$, where $\mathbf{x}_{k}\in\mathbb{R}^{n_{x}\times 1}$, evolving as
\par\noindent\small
\begin{align}
    \mathbf{x}_{k+1}=f(\mathbf{x}_{k})+\mathbf{w}_{k},\label{eqn:state evolution x}
\end{align}
\normalsize
where $\mathbf{w}_{k}\sim\mathcal{N}(\mathbf{0}_{n_{x}\times 1}, \mathbf{Q})$ represents the process noise with covariance matrix $\mathbf{Q}$. The defender perfectly knows its current state. At the time $k$, the adversary's observation is
\par\noindent\small
\begin{align}
    \mathbf{y}_{k}=h(\mathbf{x}_{k})+\mathbf{v}_{k}\;\in\mathbb{R}^{n_{y}\times 1},\label{eqn:observation y}
\end{align}
\normalsize
where $\mathbf{v}_{k}\sim\mathcal{N}(\mathbf{0}_{n_{y}\times 1},\mathbf{R})$ is the adversary's measurement noise with covariance matrix $\mathbf{R}$. The adversary infers an estimate $\hat{\mathbf{x}}_{k}$ of defender's current state $\mathbf{x}_{k}$ using the observations $\lbrace\mathbf{y}_{j}\rbrace_{1\leq j\leq k}$ with its forward UKF. This estimate is then used by the adversary to compute and take an action $g(\hat{\mathbf{x}}_{k})$ whose noisy observation $\mathbf{a}_{k}\in\mathbb{R}^{n_{a}\times 1}$ recorded by the defender is
\par\noindent\small
\begin{align}
    \mathbf{a}_{k}=g(\hat{\mathbf{x}}_{k})+\bm{\epsilon}_{k},\label{eqn:observation a}
\end{align}
\normalsize
where $\bm{\epsilon}_{k}\sim\mathcal{N}(\mathbf{0}_{n_{a}\times 1},\bm{\Sigma}_{\epsilon})$ is defender's measurement noise with covariance matrix $\bm{\Sigma}_{\epsilon}$. Finally, defender computes the estimate $\Hat{\Hat{\mathbf{x}}}_{k}$ of $\hat{\mathbf{x}}_{k}$ using $\lbrace\mathbf{a}_{j},\mathbf{x}_{j}\rbrace_{1\leq j\leq k}$ in IUKF.

The functions $f(\cdot)$, $h(\cdot)$, and $g(\cdot)$ are suitable non-linear vector-valued functions. The noise processes $\lbrace\mathbf{w}_{k}\rbrace_{k\geq 0}$, $\lbrace\mathbf{v}_{k}\rbrace_{k\geq 1}$, and $\lbrace\bm{\epsilon}_{k}\rbrace_{k\geq 1}$ are mutually independent and identically distributed across time. Throughout this paper, the system model is perfectly known to both the defender and adversary. This is the same as the assumption made in prior formulations of IKF \cite{krishnamurthy2019how} and IEKF \cite{singh2022inverse}. Furthermore, our numerical experiments later show that the proposed IUKF is observed to provide reasonable estimates even when the forward UKF assumption does not hold.

\noindent\textbf{Forward UKF:} The state estimates in the UKF are obtained as a weighted sum of a set of $2n_{x}+1$ sigma points, generated deterministically from the previous state estimate. Considering scaling parameter $\kappa\in\mathbb{R}$, the sigma points $\lbrace\widetilde{\mathbf{x}}_{i}\rbrace_{0\leq i\leq 2n_{x}}$ generated from state estimate $\hat{x}$ and its error covariance matrix estimate $\bm{\Sigma}$ are
\par\noindent\small
\begin{align}
  &\widetilde{\mathbf{x}}_{i}=S_{gen}(\hat{\mathbf{x}},\bm{\Sigma})\nonumber\\
  &=\begin{cases}
  \hat{\mathbf{x}},\;\;\;\;\;\;\;i=0,\\
 \hat{\mathbf{x}}+\left[\sqrt{(n_{x}+\kappa)\bm{\Sigma}}\right]_{(:,i)},\;i=1,2,\hdots,n_{x}\\
    \hat{\mathbf{x}}-\left[\sqrt{(n_{x}+\kappa)\bm{\Sigma}}\right]_{(:,i-n_{x})},\;i=n_{x}+1,n_{x}+2,\hdots,2n_{x}\end{cases},\label{eqn:sigma points generation}
\end{align}
\normalsize
with their weights $\omega_{i}=\begin{cases}\frac{\kappa}{n_{x}+\kappa} & i=0\\\frac{1}{2(n_{x}+\kappa)} & i=1,2,\hdots,2n_{x}\end{cases}$. Denote the sigma points generated and propagated for the time update by $\lbrace\mathbf{s}_{i,k}\rbrace_{0\leq i\leq 2n_{x}}$ and $\lbrace\mathbf{s}^{*}_{i,k+1|k}\rbrace_{0\leq i\leq 2n_{x}}$, respectively. Similarly, $\lbrace\mathbf{q}_{i,k+1|k}\rbrace_{0\leq i\leq 2n_{x}}$ and $\lbrace\mathbf{q}^{*}_{i,k+1|k}\rbrace_{0\leq i\leq 2n_{x}}$ are the sigma points, respectively, generated and propagated to predict observation $\mathbf{y}_{k+1}$ as $\hat{\mathbf{y}}_{k+1|k}$ in the measurement update. The adversary's forward UKF recursions are \cite{simon2006optimal}
\par\noindent\small
\begin{align}
    &\textrm{Time update:}\;\;\;\lbrace\mathbf{s}_{i,k}\rbrace_{0\leq i\leq 2n_{x}}=S_{gen}(\hat{\mathbf{x}}_{k},\bm{\Sigma}_{k}),\label{eqn:forward ukf prediction sigma points}\\
    &\mathbf{s}^{*}_{i,k+1|k}=f(\mathbf{s}_{i,k})\;\;\;\forall i=0,1,\hdots,2n_{x},\nonumber\\
    &\hat{\mathbf{x}}_{k+1|k}=\sum_{i=0}^{2n_{x}}\omega_{i}\mathbf{s}^{*}_{i,k+1|k},\label{eqn:forward ukf x predict}\\
    &\bm{\Sigma}_{k+1|k}=\sum_{i=0}^{2n_{x}}\omega_{i}\mathbf{s}^{*}_{i,k+1|k}(\mathbf{s}^{*}_{i,k+1|k})^{T}-\hat{\mathbf{x}}_{k+1|k}(\hat{\mathbf{x}}_{k+1|k})^{T}+\mathbf{Q},\nonumber\\
    &\textrm{Measurement update:}\nonumber\\
    &\lbrace\mathbf{q}_{i,k+1|k}\rbrace_{0\leq i\leq 2n_{x}}=S_{gen}(\hat{\mathbf{x}}_{k+1|k},\bm{\Sigma}_{k+1|k}),\label{eqn:forward UKF update sigma points}\\
    &\mathbf{q}^{*}_{i,k+1|k}=h(\mathbf{q}_{i,k+1|k})\;\;\;\forall i=0,1,\hdots,2n_{x},\nonumber\\
     &\hat{\mathbf{y}}_{k+1|k}=\sum_{i=0}^{2n_{x}}\omega_{i}\mathbf{q}^{*}_{i,k+1|k},\label{eqn:forward ukf y predict}\\
    &\bm{\Sigma}^{y}_{k+1}=\sum_{i=0}^{2n_{x}}\omega_{i}\mathbf{q}^{*}_{i,k+1|k}(\mathbf{q}^{*}_{i,k+1|k})^{T}-\hat{\mathbf{y}}_{k+1|k}(\hat{\mathbf{y}}_{k+1|k})^{T}+\mathbf{R},\nonumber\\
    &\bm{\Sigma}^{xy}_{k+1}=\sum_{i=0}^{2n_{x}}\omega_{i}\mathbf{q}_{i,k+1|k}(\mathbf{q}^{*}_{i,k+1|k})^{T}-\hat{\mathbf{x}}_{k+1|k}(\hat{\mathbf{y}}_{k+1|k})^{T},\nonumber\\
    &\mathbf{K}_{k+1}=\bm{\Sigma}^{xy}_{k+1}\left(\bm{\Sigma}^{y}_{k+1}\right)^{-1},\nonumber\\
    &\hat{\mathbf{x}}_{k+1}=\hat{\mathbf{x}}_{k+1|k}+\mathbf{K}_{k+1}(\mathbf{y}_{k+1}-\hat{\mathbf{y}}_{k+1|k}),\label{eqn:forward ukf x update}\\
    &\bm{\Sigma}_{k+1}=\bm{\Sigma}_{k+1|k}-\mathbf{K}_{k+1}\bm{\Sigma}^{y}_{k+1}\mathbf{K}_{k+1}^{T}.\label{eqn:forward UKF sigma update}
\end{align}
\normalsize

\noindent\textbf{Inverse UKF:} The IUKF seeks to find an estimate $\Hat{\Hat{\mathbf{x}}}_{k}$ of the state estimate $\hat{\mathbf{x}}_{k}$ given our actual states $\lbrace\mathbf{x}_{j}\rbrace_{1\leq j\leq k}$ and observations $\lbrace\mathbf{a}_{j}\rbrace_{1\leq j\leq k}$ given by \eqref{eqn:observation a}. To this end, the inverse filter has $\hat{\mathbf{x}}_{k}$ as the state and $\mathbf{a}_{k}$ as the observation. Substituting \eqref{eqn:observation y}, \eqref{eqn:forward ukf x predict} and \eqref{eqn:forward ukf y predict} in \eqref{eqn:forward ukf x update}, yields the state transition
\par\noindent\small
\begin{align}
    \hat{\mathbf{x}}_{k+1}&=\sum_{i=0}^{2n_{x}}\omega_{i}\left(\mathbf{s}^{*}_{i,k+1|k}-\mathbf{K}_{k+1}\mathbf{q}^{*}_{i,k+1|k}\right)+\mathbf{K}_{k+1}h(\mathbf{x}_{k+1})\nonumber\\
    &\;\;+\mathbf{K}_{k+1}\mathbf{v}_{k+1}.\label{eqn: IUKF state transition detail}
\end{align}
\normalsize
In this state transition, $\mathbf{x}_{k+1}$ is a known exogenous input while $\mathbf{v}_{k+1}$ represents the process noise involved. The propagated sigma points $\lbrace\mathbf{s}^{*}_{i,k+1|k}\rbrace_{0\leq i\leq 2n_{x}}$ and $\lbrace\mathbf{q}^{*}_{i,k+1|k}\rbrace_{0\leq i\leq 2n_{x}}$, and the gain matrix $\mathbf{K}_{k+1}$ are functions of the first set of sigma points $\lbrace\mathbf{s}_{i,k}\rbrace_{0\leq i\leq 2n_{x}}$. These sigma points in turn, are obtained deterministically from the previous state estimate $\hat{\mathbf{x}}_{k}$ and covariance matrix $\bm{\Sigma}_{k}$ using \eqref{eqn:forward ukf prediction sigma points}. Hence, IUKF's state transition, \emph{under the assumption that parameter $\kappa$ is known to the defender}, is
\par\noindent\small
\begin{align}
    \hat{\mathbf{x}}_{k+1}=\widetilde{f}(\hat{\mathbf{x}}_{k},\bm{\Sigma}_{k},\mathbf{x}_{k+1},\mathbf{v}_{k+1}).\label{eqn:inverse ukf state transition}
\end{align}
\normalsize
Note that the process noise $\mathbf{v}_{k+1}$ is non-additive because $\mathbf{K}_{k+1}$ depends on the previous estimates. Furthermore, the covariance matrix $\bm{\Sigma}_{k}$ does not depend on the current forward filter's observation $\mathbf{y}_{k}$. It is evaluated recursively using the previous estimates and initial covariance estimate $\bm{\Sigma}_{0}$. The inverse filter treats $\bm{\Sigma}_{k}$ as a known exogenous input in the state transition \eqref{eqn:inverse ukf state transition}. It approximates $\bm{\Sigma}_{k}$ as $\bm{\Sigma}_{k}^{*}$ by computing the covariance matrix using its own previous estimate, i.e. $\Hat{\Hat{\mathbf{x}}}_{k}$, in the same recursive manner as the forward filter estimates at its estimate $\hat{\mathbf{x}}_{k}$ (using \eqref{eqn:forward UKF sigma update} after computing gain matrix $\mathbf{K}_{k+1}$ from the generated sigma points).

The presence of non-additive noise term in the state transition \eqref{eqn:inverse ukf state transition} leads us to consider an augmented state vector $\mathbf{z}_{k}=[\hat{\mathbf{x}}_{k}^{T},\mathbf{v}_{k+1}^{T}]^{T}$ of dimension $n_{z}=n_{x}+n_{y}$ for the inverse filter formulation. Here, \eqref{eqn:inverse ukf state transition} is expressed in terms of $\mathbf{z}_{k}$ as $\hat{\mathbf{x}}_{k+1}=\widetilde{f}(\mathbf{z}_{k},\bm{\Sigma}_{k},\mathbf{x}_{k+1})$. Denote $\hat{\mathbf{z}}_{k}=[\Hat{\Hat{\mathbf{x}}}_{k}^{T},\mathbf{0}_{1\times n_{y}}]^T$ and $\overline{\bm{\Sigma}}^{z}_{k}=\left[\begin{smallmatrix}\overline{\bm{\Sigma}}_{k}&\mathbf{0}_{n_{x}\times n_{y}}\\\mathbf{0}_{n_{y}\times n_{x}}&\mathbf{R}\end{smallmatrix}\right]$. Considering $\overline{\kappa}$ as IUKF's scaling parameter, the sigma points $\{\overline{\mathbf{s}}_{j,k}\}_{0\leq j\leq 2n_{z}}$ are generated from $\hat{\mathbf{z}}_{k}$ and $\overline{\bm{\Sigma}}^{z}_{k}$ similar to \eqref{eqn:sigma points generation} with weights $\overline{\omega}_{j}$. Finally, the IUKF's recursions to infer the estimate $\Hat{\Hat{\mathbf{x}}}_{k}$ and the associated error covariance estimate $\overline{\bm{\Sigma}}_{k}$ are
\par\noindent\small
\begin{align}
    &\textrm{Time update:}\nonumber\\
    &\overline{s}^{*}_{j,k+1|k}=\widetilde{f}(\overline{\mathbf{s}}_{j,k},\bm{\Sigma}_{k}^{*},\mathbf{x}_{k+1})\;\;\;\forall j=0,1,\hdots,2n_{z},\\
    &\Hat{\Hat{\mathbf{x}}}_{k+1|k}=\sum_{j=0}^{2n_{z}}\overline{\omega}_{j}\overline{\mathbf{s}}^{*}_{j,k+1|k},\label{eqn:IUKF state predict}\\
    &\overline{\bm{\Sigma}}_{k+1|k}=\sum_{j=0}^{2n_{z}}\overline{\omega}_{j}\overline{\mathbf{s}}^{*}_{j,k+1|k}(\overline{\mathbf{s}}^{*}_{j,k+1|k})^{T}-\Hat{\Hat{\mathbf{x}}}_{k+1|k}\Hat{\Hat{\mathbf{x}}}_{k+1|k}^{T},\nonumber\\
    &\textrm{Measurement update:}\;\;\;\mathbf{a}^{*}_{j,k+1|k}=g(\overline{\mathbf{s}}^{*}_{j,k+1|k})\;\;\;\forall j=0,1,\hdots,2n_{z},\nonumber\\
    &\hat{\mathbf{a}}_{k+1|k}=\sum_{j=0}^{2n_{z}}\overline{\omega}_{j}\mathbf{a}^{*}_{j,k+1|k},\nonumber\\
    &\overline{\bm{\Sigma}}^{a}_{k+1}=\sum_{j=0}^{2n_{z}}\overline{\omega}_{j}\mathbf{a}^{*}_{j,k+1|k}(\mathbf{a}^{*}_{j,k+1|k})^{T}-\hat{\mathbf{a}}_{k+1|k}\hat{\mathbf{a}}_{k+1|k}^{T}+\bm{\Sigma}_{\epsilon},\nonumber
\end{align}
\begin{align}
    &\textrm{Measurement update (contd.):}\nonumber\\
    &\overline{\bm{\Sigma}}^{xa}_{k+1}=\sum_{j=0}^{2n_{z}}\overline{\omega}_{j}\overline{\mathbf{s}}^{*}_{j,k+1|k}(\mathbf{a}^{*}_{j,k+1|k})^{T}-\Hat{\Hat{\mathbf{x}}}_{k+1|k}\hat{\mathbf{a}}_{k+1|k}^{T},\nonumber\\
    &\overline{\mathbf{K}}_{k+1}=\overline{\bm{\Sigma}}^{xa}_{k+1}\left(\overline{\bm{\Sigma}}^{a}_{k+1}\right)^{-1}\nonumber\\
    &\Hat{\Hat{x}}_{k+1}=\Hat{\Hat{x}}_{k+1|k}+\overline{\mathbf{K}}_{k+1}(\mathbf{a}_{k+1}-\hat{\mathbf{a}}_{k+1|k}),\nonumber\\
    &\overline{\bm{\Sigma}}_{k+1}=\overline{\bm{\Sigma}}_{k+1|k}-\overline{\mathbf{K}}_{k+1}\overline{\bm{\Sigma}}^{a}_{k+1}\overline{\mathbf{K}}_{k}^{T}.\nonumber
\end{align}
\normalsize
\begin{remark}
The IUKF recursions follow from the augmented state formulation of UKF for non-additive noise \cite{wan2000unscented} so that sigma points generated are in higher dimensional state space ($n_{z}=n_{x}+n_{y}$ dimensional) as compared to those generated in forward UKF ($n_{x}$ dimensional). Whereas forward UKF requires a new set for the measurement update, the IUKF generates these points only once for the time update considering the state transition's process noise statistics.
\end{remark}
\begin{remark}
Unlike IKF \cite{krishnamurthy2019how} and IEKF \cite{singh2022inverse}, the forward gain matrix of IUKF $\mathbf{K}_{k+1}$ is not treated as a time-varying parameter of \eqref{eqn: IUKF state transition detail}. In KF, the gain matrix is deterministic and fully determined by the model parameters given the initial covariance estimate $\bm{\Sigma}_{0}$. Similarly, in EKF, the gain matrix, computed from the covariance estimates, depends on the linearized model functions at the state estimate. But these covariance estimates in UKF are obtained as the weighted average of the generated sigma points, which are explicit functions of the state estimates ($\hat{\mathbf{x}}_{k}$ and $\hat{\mathbf{x}}_{k+1|k}$), prohibiting the inverse filter from treating $\mathbf{K}_{k+1}$ as a parameter of \eqref{eqn: IUKF state transition detail}.
\end{remark}
\section{Stability Guarantees}
\label{sec:stability}
We show that the IUKF's error dynamics satisfy the stability conditions of a general UKF under mild system conditions if the forward UKF is stable. Therefore, we first examine the stochastic stability of forward UKF. We consider the general case of time-varying process and measurement noise covariances $\mathbf{Q}_{k}$, $\mathbf{R}_{k}$ and $\overline{\mathbf{R}}_{k}$ instead of $\mathbf{Q}$, $\mathbf{R}$ and $\bm{\Sigma}_{\epsilon}$, respectively. Recall the definition of exponential-mean-squared-boundedness of a stochastic process.
\begin{definition}[Exponential mean-squared boundedness] \cite{reif1999stochastic} The stochastic process $\{\bm{\zeta}_{k} \}_{k \geq 0}$ is said to be exponentially bounded in the mean-squared sense if there exist real numbers $\eta>0$, $\nu>0$ and $0<\lambda<1$ such that $\mathbb{E}\left[\|\bm{\zeta}_{k}\|_{2}^{2}\right]\leq \eta\mathbb{E}\left[\|\bm{\zeta}_{0}\|_{2}^{2}\right]\lambda^{k}+\nu$ holds for every $k\geq 0$.
\end{definition}

\noindent\textbf{Forward UKF:} Denote the forward UKF's state prediction, state estimation, and measurement prediction errors by $\widetilde{\mathbf{x}}_{k+1|k}\doteq\mathbf{x}_{k+1}-\hat{\mathbf{x}}_{k+1|k}$, $\widetilde{\mathbf{x}}_{k}\doteq\mathbf{x}_{k}-\hat{\mathbf{x}}_{k}$ and $\widetilde{\mathbf{y}}_{k+1}\doteq\mathbf{y}_{k+1}-\hat{\mathbf{y}}_{k+1|k}$, respectively. From \eqref{eqn:state evolution x} and \eqref{eqn:forward ukf x predict}, we have $\widetilde{\mathbf{x}}_{k+1|k}=f(\mathbf{x}_{k})+\mathbf{w}_{k}-\sum_{i=0}^{2n_{x}}\omega_{i}\mathbf{s}^{*}_{i,k+1|k}$, which on substituting $\mathbf{s}^{*}_{i,k+1|k}=f(\mathbf{s}_{i,k})$ yields 
  $  \widetilde{\mathbf{x}}_{k+1|k}=f(\mathbf{x}_{k})+\mathbf{w}_{k}-\sum_{i=0}^{2n_{x}}\omega_{i}f(\mathbf{s}_{i,k})$. 
Using the first-order Taylor series expansion of $f(\cdot)$ at $\hat{\mathbf{x}}_{k}$, 
we have $\widetilde{\mathbf{x}}_{k+1|k}\approx f(\hat{\mathbf{x}}_{k})+\mathbf{F}_{k}(\mathbf{x}_{k}-\hat{\mathbf{x}}_{k})+\mathbf{w}_{k}-\sum_{i=0}^{2n_{x}}\omega_{i}(f(\hat{\mathbf{x}}_{k})+\mathbf{F}_{k}(\mathbf{s}_{i,k}-\hat{\mathbf{x}}_{k}))$, where $\mathbf{F}_{k}\doteq\frac{\partial f(\mathbf{x})}{\partial\mathbf{x}}\vert_{\mathbf{x}=\hat{\mathbf{x}}_{k}}$. The sigma points $\lbrace\mathbf{s}_{i,k}\rbrace_{0\leq i\leq 2n_{x}}$ are chosen symmetrically about $\hat{\mathbf{x}}_{k}$. Substituting for $\mathbf{s}_{i,k}$ in terms of $\hat{\mathbf{x}}_{k}$ and $\bm{\Sigma}_{k}$ using \eqref{eqn:forward ukf prediction sigma points} simplifies the state prediction error to 
    $\widetilde{\mathbf{x}}_{k+1|k}\approx\mathbf{F}_{k}\widetilde{\mathbf{x}}_{k}+\mathbf{w}_{k}$.
Similar to \cite{xiong2006performance_ukf, li2012stochastic_ukf}, we introduce an unknown instrumental diagonal matrix $\mathbf{U}^{x}_{k}\in\mathbb{R}^{n_{x}\times n_{x}}$ to account for the linearization errors and obtain 
\par\noindent\small
\begin{align}
    \widetilde{\mathbf{x}}_{k+1|k}=\mathbf{U}^{x}_{k}\mathbf{F}_{k}\widetilde{\mathbf{x}}_{k}+\mathbf{w}_{k}.\label{eqn:linearized x}
\end{align}
\normalsize
Linearizing $h(\cdot)$ in \eqref{eqn:observation y} and introducing unknown diagonal matrix $\mathbf{U}^{y}_{k}\in\mathbb{R}^{n_{y}\times n_{y}}$ in \eqref{eqn:forward ukf y predict} yields
\par\noindent\small
\begin{align}
    \widetilde{\mathbf{y}}_{k+1}=\mathbf{U}^{y}_{k+1}\mathbf{H}_{k+1}\widetilde{\mathbf{x}}_{k+1|k}+\mathbf{v}_{k+1},\label{eqn:linearized y}
\end{align}
\normalsize
where $\mathbf{H}_{k+1}\doteq\frac{\partial h(\mathbf{x})}{\partial\mathbf{x}}\vert_{\mathbf{x}=\hat{\mathbf{x}}_{k+1|k}}$. Using \eqref{eqn:forward ukf x update}, we have $\widetilde{\mathbf{x}}_{k}=\widetilde{\mathbf{x}}_{k|k-1}-\mathbf{K}_{k}\widetilde{\mathbf{y}}_{k}$, which when substituted in \eqref{eqn:linearized x} with \eqref{eqn:linearized y} yields the forward UKF's prediction error dynamics as
\par\noindent\small
\begin{align}
    \widetilde{\mathbf{x}}_{k+1|k}=\mathbf{U}^{x}_{k}\mathbf{F}_{k}(\mathbf{I}-\mathbf{K}_{k}\mathbf{U}^{y}_{k}\mathbf{H}_{k})\widetilde{\mathbf{x}}_{k|k-1}-\mathbf{U}^{x}_{k}\mathbf{F}_{k}\mathbf{K}_{k}\mathbf{v}_{k}+\mathbf{w}_{k}.\label{eqn:forward ukf error dynamics}
\end{align}
\normalsize
Denote the true prediction covariance by  $\mathbf{P}_{k+1|k}=\mathbb{E}\left[\widetilde{\mathbf{x}}_{k+1|k}\widetilde{\mathbf{x}}_{k+1|k}^{T}\right]$. Define $\delta\mathbf{P}_{k+1|k}$ as the difference of estimated prediction covariance $\bm{\Sigma}_{k+1|k}$ and the true prediction covariance $\mathbf{P}_{k+1|k}$, while $\Delta\mathbf{P}_{k+1|k}$ is the error in the approximation of the expectation\\
\small$\mathbb{E}\left[\mathbf{U}^{x}_{k}\mathbf{F}_{k}(\mathbf{I}-\mathbf{K}_{k}\mathbf{U}^{y}_{k}\mathbf{H}_{k})\widetilde{\mathbf{x}}_{k|k-1}\widetilde{\mathbf{x}}_{k|k-1}^{T}(\mathbf{I}-\mathbf{K}_{k}\mathbf{U}^{y}_{k}\mathbf{H}_{k})^{T}\mathbf{F}_{k}^{T}\mathbf{U}^{x}_{k}\right]$
\normalsize
by $\mathbf{U}^{x}_{k}\mathbf{F}_{k}(\mathbf{I}-\mathbf{K}_{k}\mathbf{U}^{y}_{k}\mathbf{H}_{k})\bm{\Sigma}_{k|k-1}(\mathbf{I}-\mathbf{K}_{k}\mathbf{U}^{y}_{k}\mathbf{H}_{k})^{T}\mathbf{F}_{k}^{T}\mathbf{U}^{x}_{k}$. Denoting $\hat{\mathbf{Q}}_{k}=\mathbf{Q}_{k}+\mathbf{U}^{x}_{k}\mathbf{F}_{k}\mathbf{K}_{k}\mathbf{R}_{k}\mathbf{K}_{k}^{T}\mathbf{F}_{k}^{T}\mathbf{U}^{x}_{k}+\delta\mathbf{P}_{k+1|k}+\Delta\mathbf{P}_{k+1|k}$ and using \eqref{eqn:forward ukf error dynamics} similar to \cite{xiong2006performance_ukf, li2012stochastic_ukf}, we have
\par\noindent\small
\begin{align}
    &\bm{\Sigma}_{k+1|k}=\hat{\mathbf{Q}}_{k}\nonumber\\
    &+\mathbf{U}^{x}_{k}\mathbf{F}_{k}(\mathbf{I}-\mathbf{K}_{k}\mathbf{U}^{y}_{k}\mathbf{H}_{k})\bm{\Sigma}_{k|k-1}(\mathbf{I}-\mathbf{K}_{k}\mathbf{U}^{y}_{k}\mathbf{H}_{k})^{T}\mathbf{F}_{k}^{T}\mathbf{U}^{x}_{k}.\label{eqn:forward UKF P predict}
\end{align}
\normalsize
Similarly, we have
\par\noindent\small
\begin{align}
    \bm{\Sigma}^{y}_{k+1}&=\mathbf{U}^{y}_{k+1}\mathbf{H}_{k+1}\bm{\Sigma}_{k+1|k}\mathbf{H}_{k+1}^{T}\mathbf{U}^{y}_{k+1}+\hat{\mathbf{R}}_{k+1},\label{eqn:forward UKF P y}\\
    \bm{\Sigma}^{xy}_{k+1}&=\begin{cases}\bm{\Sigma}_{k+1|k}\mathbf{U}^{xy}_{k+1}\mathbf{H}_{k+1}^{T}\mathbf{U}^{y}_{k+1}, & n_{x}\geq n_{y}\\
    \bm{\Sigma}_{k+1|k}\mathbf{H}_{k+1}^{T}\mathbf{U}^{y}_{k+1}\mathbf{U}^{xy}_{k+1}, & n_{x}<n_{y}\end{cases},\label{eqn:forward UKF P xy}
\end{align}
\normalsize
where $\hat{\mathbf{R}}_{k+1}=\mathbf{R}_{k+1}+\Delta\mathbf{P}^{y}_{k+1}+\delta\mathbf{P}^{y}_{k+1}$ with $\delta\mathbf{P}^{y}_{k+1}$ and $\Delta\mathbf{P}^{y}_{k+1}$, respectively, accounting for the difference in true and estimated measurement prediction covariances, and error in the approximation of the expectation. Also, $\mathbf{U}^{xy}_{k+1}$ is an unknown instrumental matrix introduced to account for errors in the estimated cross-covariance $\bm{\Sigma}^{xy}_{k+1}$. Following Theorem~\ref{theorem:forward ukf stability} provides sufficient conditions for UKF's stability.
\begin{theorem}[Stochastic stability of forward UKF]
\label{theorem:forward ukf stability}
Consider the non-linear stochastic system given by \eqref{eqn:state evolution x} and \eqref{eqn:observation y} and the forward UKF. Let the following hold true.\\
\textbf{C1.} There exist positive real numbers $\bar{f}$, $\bar{h}$, $\bar{\alpha}$, $\bar{\beta}$, $\bar{\gamma}$, $\underline{\sigma}$, $\bar{\sigma}$, $\bar{q}$, $\bar{r}$, $\hat{q}$ and $\hat{r}$ such that the following bounds are fulfilled for all $k\geq 0$.
\par\noindent\small
    \begin{align*}        &\|\mathbf{F}_{k}\|\leq\bar{f},\;\;\|\mathbf{H}_{k}\|\leq\bar{h},\;\;\|\mathbf{U}^{x}_{k}\|\leq\bar{\alpha},\;\;\|\mathbf{U}^{y}_{k}\|\leq\bar{\beta},\;\;\|\mathbf{U}^{xy}_{k}\|\leq\bar{\gamma},\\        &\mathbf{Q}_{k}\preceq\bar{q}\mathbf{I},\;\;\mathbf{R}_{k}\preceq\bar{r}\mathbf{I},\;\;\hat{q}\mathbf{I}\preceq\hat{\mathbf{Q}}_{k},\;\;\hat{r}\mathbf{I}\preceq\hat{\mathbf{R}}_{k},\;\;\underline{\sigma}\mathbf{I}\preceq\bm{\Sigma}_{k|k-1}\preceq\bar{\sigma}\mathbf{I}.
    \end{align*}
\normalsize
\textbf{C2.} $\mathbf{U}^{x}_{k}$ and $\mathbf{F}_{k}$ are non-singular for every $k\geq 0$.\\
\textbf{C3.} The constants satisfy the inequality $\bar{\sigma}\bar{\gamma}\bar{h}^{2}\bar{\beta}^{2}<\hat{r}$.\\
Then, the prediction error $\widetilde{\mathbf{x}}_{k|k-1}$ and hence, the estimation error $\widetilde{\mathbf{x}}_{k}$ of the forward UKF are exponentially bounded in mean-squared sense and bounded with probability one.
\end{theorem}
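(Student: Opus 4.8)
The plan is to follow the Lyapunov-function route to stochastic filter stability of \cite{reif1999stochastic}, adapted to the UKF as in \cite{xiong2006performance_ukf, li2012stochastic_ukf}, working entirely from the linearized prediction-error recursion \eqref{eqn:forward ukf error dynamics}. Writing $\mathbf{A}_{k}\doteq\mathbf{U}^{x}_{k}\mathbf{F}_{k}(\mathbf{I}-\mathbf{K}_{k}\mathbf{U}^{y}_{k}\mathbf{H}_{k})$ and $\mathbf{r}_{k}\doteq\mathbf{w}_{k}-\mathbf{U}^{x}_{k}\mathbf{F}_{k}\mathbf{K}_{k}\mathbf{v}_{k}$, the dynamics read $\widetilde{\mathbf{x}}_{k+1|k}=\mathbf{A}_{k}\widetilde{\mathbf{x}}_{k|k-1}+\mathbf{r}_{k}$, where $\mathbf{r}_{k}$ is zero-mean and independent of $\widetilde{\mathbf{x}}_{k|k-1}$. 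I would take the stochastic Lyapunov function $V_{k}=\widetilde{\mathbf{x}}_{k|k-1}^{T}\bm{\Sigma}_{k|k-1}^{-1}\widetilde{\mathbf{x}}_{k|k-1}$, whose sandwiching $\tfrac{1}{\bar{\sigma}}\|\widetilde{\mathbf{x}}_{k|k-1}\|_{2}^{2}\leq V_{k}\leq\tfrac{1}{\underline{\sigma}}\|\widetilde{\mathbf{x}}_{k|k-1}\|_{2}^{2}$ is immediate from the covariance bounds in \textbf{C1}. The target is a one-step drift inequality $\mathbb{E}[V_{k+1}\mid\widetilde{\mathbf{x}}_{k|k-1}]\leq(1-\alpha)V_{k}+\mu$ for some $0<\alpha<1$ and $\mu>0$, after which exponential mean-squared boundedness and, via the accompanying supermartingale argument, boundedness with probability one follow from the stability lemma of \cite{reif1999stochastic}.

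The core of the argument is the contraction estimate $\mathbf{A}_{k}^{T}\bm{\Sigma}_{k+1|k}^{-1}\mathbf{A}_{k}\preceq(1-\alpha)\bm{\Sigma}_{k|k-1}^{-1}$. First I would bound the forward gain: from \eqref{eqn:forward UKF P xy} and \textbf{C1} one gets $\|\bm{\Sigma}^{xy}_{k+1}\|\leq\bar{\sigma}\bar{\gamma}\bar{h}\bar{\beta}$, while \eqref{eqn:forward UKF P y} gives $\bm{\Sigma}^{y}_{k+1}\succeq\hat{r}\mathbf{I}$, so $\|\mathbf{K}_{k+1}\|\leq\bar{\sigma}\bar{\gamma}\bar{h}\bar{\beta}/\hat{r}$ and hence $\|\mathbf{K}_{k}\mathbf{U}^{y}_{k}\mathbf{H}_{k}\|\leq\bar{\sigma}\bar{\gamma}\bar{h}^{2}\bar{\beta}^{2}/\hat{r}$. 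Condition \textbf{C3} is exactly what forces this below one, which by a Neumann-series argument makes $\mathbf{I}-\mathbf{K}_{k}\mathbf{U}^{y}_{k}\mathbf{H}_{k}$ invertible and yields $\|\mathbf{A}_{k}\|\leq\bar{\alpha}\bar{f}(1+\bar{\sigma}\bar{\gamma}\bar{h}^{2}\bar{\beta}^{2}/\hat{r})\doteq\bar{a}$; with the nonsingularity of $\mathbf{U}^{x}_{k},\mathbf{F}_{k}$ from \textbf{C2}, this also makes $\mathbf{A}_{k}$ invertible. I would then rewrite \eqref{eqn:forward UKF P predict} as $\bm{\Sigma}_{k+1|k}=\mathbf{A}_{k}\bm{\Sigma}_{k|k-1}\mathbf{A}_{k}^{T}+\hat{\mathbf{Q}}_{k}$, conjugate by $\mathbf{A}_{k}^{-1}$ to get $\mathbf{A}_{k}^{-1}\bm{\Sigma}_{k+1|k}\mathbf{A}_{k}^{-T}=\bm{\Sigma}_{k|k-1}+\mathbf{A}_{k}^{-1}\hat{\mathbf{Q}}_{k}\mathbf{A}_{k}^{-T}$, and combine $\hat{q}\mathbf{I}\preceq\hat{\mathbf{Q}}_{k}$ with $\|\mathbf{A}_{k}\|\leq\bar{a}$ and $\bm{\Sigma}_{k|k-1}\preceq\bar{\sigma}\mathbf{I}$ to obtain $\mathbf{A}_{k}^{-1}\bm{\Sigma}_{k+1|k}\mathbf{A}_{k}^{-T}\succeq(1+\hat{q}/(\bar{a}^{2}\bar{\sigma}))\bm{\Sigma}_{k|k-1}$. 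Inverting both sides and setting $1-\alpha=(1+\hat{q}/(\bar{a}^{2}\bar{\sigma}))^{-1}\in(0,1)$ delivers the contraction.

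With the contraction established, the drift computation is routine: taking the conditional expectation of $V_{k+1}=(\mathbf{A}_{k}\widetilde{\mathbf{x}}_{k|k-1}+\mathbf{r}_{k})^{T}\bm{\Sigma}_{k+1|k}^{-1}(\mathbf{A}_{k}\widetilde{\mathbf{x}}_{k|k-1}+\mathbf{r}_{k})$ annihilates the cross term by zero-mean noise, the quadratic term is at most $(1-\alpha)V_{k}$ by the contraction, and the residual $\mathbb{E}[\mathbf{r}_{k}^{T}\bm{\Sigma}_{k+1|k}^{-1}\mathbf{r}_{k}]\leq\tfrac{1}{\underline{\sigma}}\mathbb{E}[\|\mathbf{r}_{k}\|_{2}^{2}]$ is bounded by a constant $\mu$ using $\bar{q}$, $\bar{r}$, the gain bound, and $\bar{\alpha},\bar{f}$. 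Substituting the lower sandwich bound for $V_{k}$ yields exponential mean-squared boundedness of $\widetilde{\mathbf{x}}_{k|k-1}$, and the estimation error $\widetilde{\mathbf{x}}_{k}=(\mathbf{I}-\mathbf{K}_{k}\mathbf{U}^{y}_{k}\mathbf{H}_{k})\widetilde{\mathbf{x}}_{k|k-1}-\mathbf{K}_{k}\mathbf{v}_{k}$ inherits it from the norm bounds on $\mathbf{K}_{k}$ and $\mathbf{H}_{k}$.

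\textbf{The main obstacle} I anticipate is the gain-bounding step and its coupling to \textbf{C3}, namely establishing $\|\mathbf{K}_{k}\mathbf{U}^{y}_{k}\mathbf{H}_{k}\|<1$ so that the invertibility of $\mathbf{A}_{k}$ and the upper bound $\bar{a}$ hold simultaneously. This is the one place where the UKF-specific structure --- the instrumental matrices $\mathbf{U}^{x}_{k},\mathbf{U}^{y}_{k},\mathbf{U}^{xy}_{k}$ absorbing the sigma-point linearization errors, and the two-branch form of $\bm{\Sigma}^{xy}_{k+1}$ in \eqref{eqn:forward UKF P xy} --- must be handled carefully, since an insufficient bound here would break the conjugation argument underlying the contraction. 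The remaining steps are standard Riccati and Lyapunov manipulations.
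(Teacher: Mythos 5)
Your proposal is correct and follows essentially the same route as the paper's proof: the Lyapunov drift argument of \cite{reif1999stochastic} applied to the linearized error dynamics \eqref{eqn:forward ukf error dynamics} in the style of \cite{xiong2006performance_ukf,li2012stochastic_ukf}, with the gain bound $\|\mathbf{K}_{k}\|\leq\bar{\sigma}\bar{\gamma}\bar{h}\bar{\beta}/\hat{r}$ obtained from \eqref{eqn:forward UKF P y}--\eqref{eqn:forward UKF P xy}, condition \textbf{C3} securing $\|\mathbf{K}_{k}\mathbf{U}^{y}_{k}\mathbf{H}_{k}\|<1$ (hence invertibility of $\mathbf{A}_{k}$ together with \textbf{C2}), and the contraction $\mathbf{A}_{k}^{T}\bm{\Sigma}_{k+1|k}^{-1}\mathbf{A}_{k}\preceq(1-\alpha)\bm{\Sigma}_{k|k-1}^{-1}$ extracted from \eqref{eqn:forward UKF P predict} by conjugation with $\mathbf{A}_{k}^{-1}$ and the bounds $\hat{q}\mathbf{I}\preceq\hat{\mathbf{Q}}_{k}$, $\bm{\Sigma}_{k|k-1}\preceq\bar{\sigma}\mathbf{I}$. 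The identification of the roles of \textbf{C1}--\textbf{C3}, the drift inequality, and the transfer from prediction to estimation error via $\widetilde{\mathbf{x}}_{k}=(\mathbf{I}-\mathbf{K}_{k}\mathbf{U}^{y}_{k}\mathbf{H}_{k})\widetilde{\mathbf{x}}_{k|k-1}-\mathbf{K}_{k}\mathbf{v}_{k}$ all match the paper's argument.
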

\begin{proof}
We omit the proof because of the paucity of space. We refer the reader to Appendix I of the online version of this paper uploaded at \url{bit.ly/3z16Hst}.
\end{proof}
\begin{remark}
The UKF stability is discussed in \cite[Theorem~1]{xiong2006performance_ukf} for only linear measurements while our case is non-linear. 
Interestingly, \cite[Theorem~1]{xiong2006performance_ukf} requires a lower bound on measurement noise covariance $\mathbf{R}_{k}$ and also, an upper bound on $\hat{\mathbf{Q}}_{k}$. But $\hat{\mathbf{Q}}_{k}$ is not upper bounded in Theorem~\ref{theorem:forward ukf stability}. We require upper (lower) bounds on noise covariances $\mathbf{Q}_{k}$ ($\hat{\mathbf{Q}}_{k}$) and $\mathbf{R}_{k}$ ($\hat{\mathbf{R}}_{k}$). 
Both $\hat{\mathbf{Q}}_{k}$ and $\hat{\mathbf{R}}_{k}$ may be made positive definite to satisfy the lower bounds by enlarging the noise covariance matrices $\mathbf{Q}_{k}$ and $\mathbf{R}_{k}$, respectively. 
This enhances the stability of the filter \cite{xiong2006performance_ukf,xiong2007authorreply}.
\end{remark}
\noindent\textbf{Inverse UKF:} Similar to the forward UKF, we introduce unknown matrices $\overline{\mathbf{U}}^{x}_{k}$ and $\overline{\mathbf{U}}^{a}_{k}$ to account for the errors in linearization of functions $\widetilde{f}(\cdot)$ and $g(\cdot)$, respectively, and $\overline{\mathbf{U}}^{xa}_{k}$ for the errors in cross-covariance matrix estimation. Also, $\hat{\overline{\mathbf{Q}}}_{k}$ and $\hat{\overline{\mathbf{R}}}_{k}$ denote the counterparts of $\hat{\mathbf{Q}}_{k}$ and $\hat{\mathbf{R}}_{k}$, respectively, in the IUKF's error dynamics. Define $\widetilde{\mathbf{F}}_{k}\doteq\left.\frac{\partial\widetilde{f}(\mathbf{x},\bm{\Sigma}_{k},\mathbf{x}_{k+1},\mathbf{0})}{\partial\mathbf{x}}\right\vert_{\mathbf{x}=\Hat{\Hat{\mathbf{x}}}_{k}}$ and $\mathbf{G}_{k}\doteq\left.\frac{\partial g(\mathbf{x})}{\partial\mathbf{x}}\right\vert_{\mathbf{x}=\Hat{\Hat{\mathbf{x}}}_{k|k-1}}$. In approximating $\bm{\Sigma}_{k}$ by $\bm{\Sigma}^{*}_{k}$ in IUKF, we ignore any errors. Assume that the forward gain $\mathbf{K}_{k+1}$ computed from $\Hat{\Hat{\mathbf{x}}}_{k}$ is approximately same as that computed from $\hat{\mathbf{x}}_{k}$ in forward UKF. Additionally, these approximation errors are bounded by positive constants because 
$\bm{\Sigma}_{k}$ and 
$\mathbf{K}_{k+1}$ can be proved to be bounded matrices under the IUKF's stability assumptions. The bounds required on various matrices for forward UKF's stability are also satisfied when these matrices are evaluated by IUKF at its own estimates, i.e., $\left\|\frac{\partial f(\mathbf{x})}{\partial\mathbf{x}}\right\|\leq \bar{f}$ and $\left\|\frac{\partial h(\mathbf{x})}{\partial\mathbf{x}}\right\|\leq \bar{h}$ where $\mathbf{x}$ is any sigma-point of the forward or inverse UKF.
We state the stability conditions of IUKF in the following theorem.
\begin{theorem}[Stochastic stability of IUKF]
\label{theorem:IUKF stability}
Consider the adversary's forward UKF that is stable as per Theorem~\ref{theorem:forward ukf stability}. Additionally, assume that the following hold true.\\
\textbf{C4.} There exist positive real numbers $\bar{g},\bar{c},\bar{d},\bar{\epsilon},\hat{c},\hat{d},\underline{p}$ and $\bar{p}$ such that the following bounds are fulfilled for all $k\geq 0$.
\par\noindent\small
\begin{align*}      &|\mathbf{G}_{k}\|\leq\bar{g},\;\;\|\overline{\mathbf{U}}^{a}_{k}\|\leq\bar{c},\;\;\|\overline{\mathbf{U}}^{xa}_{k}\|\leq\bar{d},\;\;\overline{\mathbf{R}}_{k}\preceq\bar{\epsilon}\mathbf{I},\;\;\hat{c}\mathbf{I}\preceq\hat{\overline{\mathbf{Q}}}_{k},\\
&\hat{d}\mathbf{I}\preceq\hat{\overline{R}}_{k},\;\;\underline{p}\mathbf{I}\preceq\overline{\bm{\Sigma}}_{k|k-1}\preceq\bar{p}\mathbf{I}.
\end{align*}
\normalsize
\textbf{C5.} There exist a real constant $\underline{y}$ (not necessarily positive) such that $\bm{\Sigma}^{y}_{k}\succeq\underline{y}\mathbf{I}$ for all $k\geq 0$.\\
\textbf{C6.} The functions $f(\cdot)$ and $h(\cdot)$ have bounded outputs i.e. $\|f(\cdot)\|_{2}\leq\delta_{f}$ and $\|h(\cdot)\|_{2}\leq\delta_{h}$ for some real positive numbers $\delta_{f}$ and $\delta_{h}$.\\
\textbf{C7.} For all $k\geq 0$, $\widetilde{\mathbf{F}}_{k}$ is non-singular and its inverse satisfies $\|\widetilde{\mathbf{F}}^{-1}_{k}\|\leq\bar{a}$ for some positive real constant $\bar{a}$.\\
Then, the IUKF's state estimation error is exponentially bounded in mean-squared sense and bounded with probability one provided that the constants satisfy the inequality $\bar{p}\bar{d}\bar{g}^{2}\bar{c}^{2}<\hat{d}$.
\end{theorem}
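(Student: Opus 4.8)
The plan is to exploit the fact, already noted at the start of this section, that the IUKF is itself a UKF---one that runs on the augmented state $\mathbf{z}_{k}=[\hat{\mathbf{x}}_{k}^{T},\mathbf{v}_{k+1}^{T}]^{T}$ with state-transition map $\widetilde{f}(\cdot)$ and measurement map $g(\cdot)$. Its error dynamics therefore admit exactly the linearized, instrumental-matrix representation \eqref{eqn:forward ukf error dynamics}--\eqref{eqn:forward UKF P xy} derived for the forward filter, with $\widetilde{\mathbf{F}}_{k}$, $\mathbf{G}_{k}$, $\overline{\mathbf{U}}^{x}_{k}$, $\overline{\mathbf{U}}^{a}_{k}$, $\overline{\mathbf{U}}^{xa}_{k}$, $\hat{\overline{\mathbf{Q}}}_{k}$ and $\hat{\overline{\mathbf{R}}}_{k}$ playing the roles of $\mathbf{F}_{k}$, $\mathbf{H}_{k}$, $\mathbf{U}^{x}_{k}$, $\mathbf{U}^{y}_{k}$, $\mathbf{U}^{xy}_{k}$, $\hat{\mathbf{Q}}_{k}$ and $\hat{\mathbf{R}}_{k}$. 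The whole argument then reduces to checking that the hypotheses \textbf{C1}--\textbf{C3} of Theorem~\ref{theorem:forward ukf stability} hold for this augmented system; once they do, Theorem~\ref{theorem:forward ukf stability} applies verbatim and yields exponential mean-squared boundedness and almost-sure boundedness of $\Hat{\Hat{\mathbf{x}}}_{k}-\hat{\mathbf{x}}_{k}$.

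Most of the required bounds are handed to us by the hypotheses: \textbf{C4} supplies the analogues $\bar{g}$, $\bar{c}$, $\bar{d}$ of $\bar{h}$, $\bar{\beta}$, $\bar{\gamma}$ and the covariance bounds $\bar{\epsilon}$, $\hat{c}$, $\hat{d}$, $\underline{p}$, $\bar{p}$ (analogues of $\bar{r}$, $\hat{q}$, $\hat{r}$, $\underline{\sigma}$, $\bar{\sigma}$), while the IUKF process-noise covariance, being of $\mathbf{K}_{k+1}\mathbf{R}_{k}\mathbf{K}_{k+1}^{T}$ type, is controlled by $\bar{r}$ of \textbf{C1} together with the boundedness of the forward gain. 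Condition \textbf{C7} gives the $\mathbf{F}_{k}$-part of \textbf{C2}, and the terminal inequality $\bar{p}\bar{d}\bar{g}^{2}\bar{c}^{2}<\hat{d}$ is exactly the analogue of \textbf{C3}. What remains, and is \emph{not} given, is (i) a uniform bound $\|\widetilde{\mathbf{F}}_{k}\|\leq\bar{\widetilde{f}}$ (the counterpart of $\bar{f}$) and (ii) boundedness and non-singularity of $\overline{\mathbf{U}}^{x}_{k}$ (the $\mathbf{U}^{x}_{k}$-part of \textbf{C2}). Item (i) is the heart of the proof.

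To bound $\|\widetilde{\mathbf{F}}_{k}\|$ I would differentiate the closed-form transition \eqref{eqn: IUKF state transition detail} with respect to $\hat{\mathbf{x}}_{k}$, treating $\bm{\Sigma}_{k}$ as the exogenous input it is in \eqref{eqn:inverse ukf state transition}. Because $\bm{\Sigma}_{k}$ is frozen, the first-level sigma points satisfy $\partial\mathbf{s}_{i,k}/\partial\hat{\mathbf{x}}_{k}=\mathbf{I}$, so the prediction term contributes $\sum_{i}\omega_{i}\frac{\partial f}{\partial\mathbf{x}}(\mathbf{s}_{i,k})$, bounded through $\bar{f}$ (valid at sigma points, as noted before the theorem). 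The costly contributions are the gain-weighted terms $\mathbf{K}_{k+1}\mathbf{q}^{*}_{i,k+1|k}$ and $\mathbf{K}_{k+1}h(\mathbf{x}_{k+1})$, where $\hat{\mathbf{x}}_{k}$ enters both through $\mathbf{K}_{k+1}$ and through the second-level sigma points $\mathbf{q}_{i,k+1|k}$, which are built from $\hat{\mathbf{x}}_{k+1|k}$ and the Cholesky factor $\sqrt{\bm{\Sigma}_{k+1|k}}$. I would expand $\mathbf{K}_{k+1}=\bm{\Sigma}^{xy}_{k+1}(\bm{\Sigma}^{y}_{k+1})^{-1}$ using the identity $\partial(\bm{\Sigma}^{y}_{k+1})^{-1}=-(\bm{\Sigma}^{y}_{k+1})^{-1}(\partial\bm{\Sigma}^{y}_{k+1})(\bm{\Sigma}^{y}_{k+1})^{-1}$, controlling $\|(\bm{\Sigma}^{y}_{k+1})^{-1}\|$ through \textbf{C5} and $\hat{r}$, and controlling $\partial\bm{\Sigma}^{y}_{k+1}$ and $\partial\bm{\Sigma}^{xy}_{k+1}$ through $\bar{h}$, through the bounded propagated points (using the bounded-output condition \textbf{C6}), and through $\partial\sqrt{\bm{\Sigma}_{k+1|k}}/\partial\hat{\mathbf{x}}_{k}$.

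The main obstacle, as anticipated in the introduction, is this Cholesky-factor derivative: the map $\mathbf{A}\mapsto\sqrt{\mathbf{A}}$ is differentiable with a controllable derivative only where $\mathbf{A}$ is uniformly positive definite. I would secure this by invoking $\underline{\sigma}\mathbf{I}\preceq\bm{\Sigma}_{k+1|k}$ from \textbf{C1}, which keeps the prediction covariance bounded away from singularity and hence makes $\partial\sqrt{\bm{\Sigma}_{k+1|k}}/\partial\hat{\mathbf{x}}_{k}$ uniformly bounded; combined with the boundedness of $\mathbf{K}_{k+1}$ and $\bm{\Sigma}_{k}$ (already argued before the theorem), this assembles a finite $\bar{\widetilde{f}}$ depending only on the constants of \textbf{C1} and \textbf{C4}--\textbf{C6} and the scaling parameter $\kappa$. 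The boundedness of $\overline{\mathbf{U}}^{x}_{k}$ then follows because it relates the now-bounded true and linearized state increments, and I expect its non-singularity to follow as in the forward setting, completing \textbf{C2}. With \textbf{C1}--\textbf{C3} verified for the augmented IUKF system, Theorem~\ref{theorem:forward ukf stability} delivers the claimed stability. All the delicate bookkeeping sits in propagating the sigma-point and Cholesky dependences through $\mathbf{K}_{k+1}$; everything downstream is the same Lyapunov recursion already used for the forward filter.
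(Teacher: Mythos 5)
Your overall route coincides with the paper's. Like the paper, you treat the IUKF as a UKF running on the inverse system (state $\hat{\mathbf{x}}_{k}$, observation $\mathbf{a}_{k}$), write its error dynamics in the same instrumental-matrix form as \eqref{eqn:forward ukf error dynamics}--\eqref{eqn:forward UKF P xy}, map the constants of \textbf{C4} onto those of \textbf{C1} (with $\bar{p}\bar{d}\bar{g}^{2}\bar{c}^{2}<\hat{d}$ as the analogue of \textbf{C3}), bound the effective process noise through the bounded forward gain, and concentrate the real work in a uniform bound on $\|\widetilde{\mathbf{F}}_{k}\|$ obtained by differentiating \eqref{eqn: IUKF state transition detail}: \textbf{C6} bounds the propagated sigma points and hence the covariance estimates, \textbf{C5} (together with $\hat{r}$) bounds $(\bm{\Sigma}^{y}_{k+1})^{-1}$ and hence $\mathbf{K}_{k+1}$ and its derivative, and the lower bound $\underline{\sigma}\mathbf{I}\preceq\bm{\Sigma}_{k+1|k}$ from \textbf{C1} keeps the matrix square-root map differentiable with bounded derivative. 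This is exactly how the paper's argument is organized, down to the roles it assigns to \textbf{C5} and \textbf{C6}.

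There is, however, one genuine gap: your treatment of $\overline{\mathbf{U}}^{x}_{k}$. You assert its boundedness ``follows because it relates the now-bounded true and linearized state increments,'' but that inference is invalid: if the true one-step increment equals $\overline{\mathbf{U}}^{x}_{k}\widetilde{\mathbf{F}}_{k}\bm{\zeta}_{k}$ (with $\bm{\zeta}_{k}$ the IUKF estimation error), then boundedness of both the true increment and of $\widetilde{\mathbf{F}}_{k}\bm{\zeta}_{k}$ does not bound $\overline{\mathbf{U}}^{x}_{k}$ --- the linearized increment can have arbitrarily small components while the true increment does not, and the ratio blows up. What repairs this is a lower bound on the linearized increment relative to the error, $\|\widetilde{\mathbf{F}}_{k}\bm{\zeta}_{k}\|_{2}\geq\|\bm{\zeta}_{k}\|_{2}/\bar{a}$, which is precisely the quantitative content of \textbf{C7}, $\|\widetilde{\mathbf{F}}^{-1}_{k}\|\leq\bar{a}$ --- a hypothesis your proof never invokes (you use \textbf{C7} only for non-singularity). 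Combined with your derived bound $\|\widetilde{\mathbf{F}}_{k}\|\leq\bar{\widetilde{f}}$ and a Taylor/mean-value bound on the true increment, this yields $\|\overline{\mathbf{U}}^{x}_{k}\|\leq\bar{a}\,\bar{\widetilde{f}}$ at the norm level. Note also that you cannot defer to ``as in the forward setting'': there, boundedness and non-singularity of $\mathbf{U}^{x}_{k}$ are \emph{postulated} in \textbf{C1}--\textbf{C2}, whereas Theorem~\ref{theorem:IUKF stability} assumes nothing about $\overline{\mathbf{U}}^{x}_{k}$, so both its boundedness and its invertibility (needed to run the Lyapunov contraction step on $\overline{\mathbf{U}}^{x}_{k}\widetilde{\mathbf{F}}_{k}(\mathbf{I}-\overline{\mathbf{K}}_{k}\overline{\mathbf{U}}^{a}_{k}\mathbf{G}_{k})$) must actually be established; \textbf{C7} is the tool that does it.
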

\begin{proof}
We refer the reader to Appendix II of the online version of this paper uploaded at \url{bit.ly/3z16Hst}.
\end{proof}

Note that, while there are no constraints on the constant $\underline{y}$ in \textbf{C5}, Theorem~\ref{theorem:IUKF stability} requires an additional lower bound on $\bm{\Sigma}^{y}_{k}$ which was not needed for forward UKF's stability. Also, $\underline{y}\neq 0$ because $(\bm{\Sigma}^{y}_{k})^{-1}$ exists for forward UKF to compute its gain. \textbf{C5} and \textbf{C6} are necessary to upper-bound the Jacobian $\widetilde{\mathbf{F}}_{k}$. The bounds $\|f(\cdot)\|_{2}\leq\delta_{f}$ and $\|h(\cdot)\|_{2}\leq\delta_{h}$ help to bound the magnitude of the propagated sigma points $\lbrace\mathbf{s}^{*}_{i,k+1|k}\rbrace$ and $\lbrace\mathbf{q}^{*}_{i,k+1|k}\rbrace$, respectively, generated from a state estimate, which in turn, upper bounds the various covariance estimates computed from them. Also, the computation of gain matrix $\mathbf{K}_{k}$ involves $(\bm{\Sigma}^{y}_{k})^{-1}$, which is upper-bounded under the lower bound assumed in \textbf{C5}.

\section{Numerical Experiments}
\label{sec:numericals}
We demonstrate the proposed IUKF's performance considering different example systems and comparing the state estimation error with the corresponding RCRLB. CRLB is a widely used performance measure for an estimator providing a lower bound on mean-squared error (MSE). For discrete-time non-linear filtering, we employ RCRLB. Denote the state vector series by $X^{k}=\lbrace\mathbf{x}_{0},\mathbf{x}_{1},\hdots,\mathbf{x}_{k}\rbrace$ while the noisy observations series by $Y^{k}=\lbrace\mathbf{y}_{0},\mathbf{y}_{1},\hdots,\mathbf{y}_{k}\rbrace$. The joint probability density of pair $(Y^{k},X^{k})$ is $p(Y^{k},X^{k})$. 

The RCRLB at $k$-th time instant for the estimate $\hat{\mathbf{x}}_{k}$ (a function of $Y^{k}$) of the true state $\mathbf{x}_{k}$ is defined as
  $\mathbb{E}\left[(\mathbf{x}_{k}-\hat{\mathbf{x}}_{k})(\mathbf{x}_{k}-\hat{\mathbf{x}}_{k})^{T}\right]\succeq\mathbf{J}_{k}^{-1}$,
where 
    $\mathbf{J}_{k}=\mathbb{E}\left[-\frac{\partial^{2}\ln{p(Y^{k},X^{k})}}{\partial\mathbf{x}_{k}^{2}}\right]$, 
is the Fisher information matrix \cite{tichavsky1998posterior}. Here, $\frac{\partial^{2}(\cdot)}{\partial\mathbf{x}^{2}}$ denotes the Hessian comprising of the second order partial derivatives. The sequence $\lbrace\mathbf{J}_{k}\rbrace$ are computed recursively as 
    $\mathbf{J}_{k}=\mathbf{D}_{k}^{22}-\mathbf{D}_{k}^{21}(\mathbf{J}_{k-1}+\mathbf{D}_{k}^{11})^{-1}\mathbf{D}_{k}^{12}$, 
where \small
    $\mathbf{D}_{k}^{11}=\mathbb{E}\left[-\frac{\partial^{2}\ln{p(\mathbf{x}_{k}\vert\mathbf{x}_{k-1})}}{\partial\mathbf{x}_{k-1}^{2}}\right]$, 
    $\mathbf{D}_{k}^{12}=\mathbb{E}\left[-\frac{\partial^{2}\ln{p(\mathbf{x}_{k}\vert\mathbf{x}_{k-1})}}{\partial\mathbf{x}_{k}\partial\mathbf{x}_{k-1}}\right]=(\mathbf{D}_{k}^{21})^{T}$, and 
    $\mathbf{D}_{k}^{22}=\mathbb{E}\left[-\frac{\partial^{2}\ln{p(\mathbf{x}_{k}\vert\mathbf{x}_{k-1})}}{\partial\mathbf{x}_{k}^{2}}\right]+\mathbb{E}\left[-\frac{\partial^{2}\ln{p(\mathbf{y}_{k}\vert\mathbf{x}_{k})}}{\partial\mathbf{x}_{k}^{2}}\right]$\cite{tichavsky1998posterior}.
\normalsize
For the non-linear system \eqref{eqn:state evolution x} and \eqref{eqn:observation y} with Gaussian noises, the $\lbrace\mathbf{J}_{k}\rbrace$ recursions simplifies to 
    $\mathbf{J}_{k}=\mathbf{Q}^{-1}+\mathbf{H}_{k}^{T}\mathbf{R}^{-1}\mathbf{H}_{k}-\mathbf{Q}^{-1}\mathbf{F}_{k}(\mathbf{J}_{k-1}+\mathbf{F}_{k}^{T}\mathbf{Q}^{-1}\mathbf{F}_{k})^{-1}\mathbf{F}_{k}^{T}\mathbf{Q}^{-1}$, 
where $\mathbf{F}_{k}\doteq\frac{\partial f(\mathbf{x})}{\partial\mathbf{x}}|_{\mathbf{x}=\hat{\mathbf{x}}_{k}}$ and $\mathbf{H}_{k}\doteq\frac{\partial h(\mathbf{x})}{\partial\mathbf{x}}|_{\mathbf{x}=\hat{\mathbf{x}}_{k}}$ \cite{xiong2006performance_ukf}. Similarly, we can compute the posterior information matrix $\overline{\mathbf{J}}_{k}$ for the inverse filter's estimate $\Hat{\Hat{\mathbf{x}}}_{k}$. 
Throughout all experiments, the initial information matrices $\mathbf{J}_{0}$ and $\overline{\mathbf{J}}_{0}$ for the forward and inverse filters were set to $\bm{\Sigma}_{0}^{-1}$ and $\overline{\bm{\Sigma}}_{0}^{-1}$, respectively. 
\noindent\textbf{FM demodulation:} 
To compare IUKF's performance with IEKF \cite{singh2022inverse}, we consider FM demodulator system \cite[Sec. 8.2]{anderson2012optimal} with system model 
\par\noindent\small
\begin{align*}
&\mathbf{x}_{k+1}\doteq\left[\begin{smallmatrix}\lambda_{k+1}\\\theta_{k+1}\end{smallmatrix}\right]=\left[\begin{smallmatrix}\exp{(-T/\beta)}&0\\-\beta \exp{(-T/\beta)}-1&1\end{smallmatrix}\right]\left[\begin{smallmatrix}\lambda_{k}\\\theta_{k}\end{smallmatrix}\right]+\left[\begin{smallmatrix}1\\-\beta\end{smallmatrix}\right]w_{k},\\
&\mathbf{y}_{k}=\sqrt{2}\left[\begin{smallmatrix}\sin{\theta_{k}}\\\cos{\theta_{k}}\end{smallmatrix}\right]+\mathbf{v}_{k},\;\;\;
a_{k}=\hat{\lambda}_{k}^{2}+\epsilon_{k},
\end{align*}
\normalsize
with $w_{k}\sim\mathcal{N}(0,0.01)$, $\mathbf{v}_{k}\sim\mathcal{N}(\mathbf{0},\mathbf{I}_{2})$, $\epsilon_{k}\sim\mathcal{N}(0,5)$, $T=2\pi/16$ and $\beta=100$. Here, $\hat{\lambda}_{k}$ is the forward filter's estimate of $\lambda_{k}$. The initial state $\mathbf{x}_{0}\doteq[\lambda_{0},\theta_{0}]^{T}$ and its estimate for the forward EKF and UKF were set randomly with $\lambda_{0}\sim\mathcal{N}(0,1)$ and $\theta_{0}\sim\mathcal{U}[-\pi,\pi]$. For the inverse filters, the initial state estimate was chosen as $\mathbf{x}_{0}$. The initial covariance estimates $\bm{\Sigma}_{0}$ and $\overline{\bm{\Sigma}}_{0}$ were set to $10\mathbf{I}_{2}$ and $5\mathbf{I}_{2}$ for the forward and inverse filters, respectively. Also, for forward and inverse UKF, $\kappa$ and $\overline{\kappa}$ both were set to $1$, but IUKF assumed the forward UKF's $\kappa$ to be $2$. 
\begin{figure}
  \centering
  \includegraphics[width = \columnwidth]{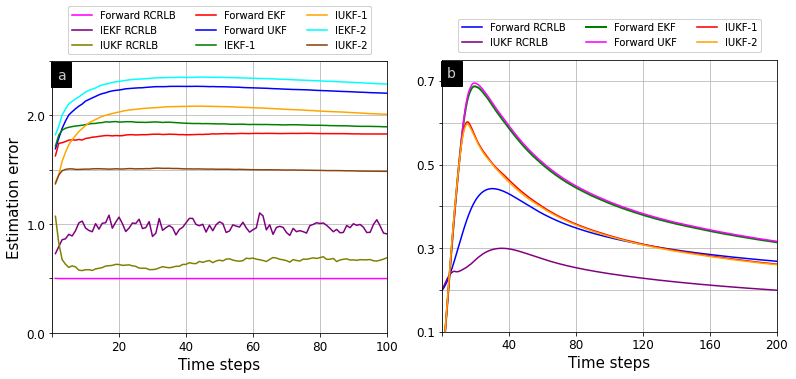}
  \caption{(a) Time-averaged RMSE and RCRLB 
  for FM demodulator system; (b) 
  As in (a) but for the vehicle reentry system.}
 \label{fig:UKF sim}
\end{figure}

Fig. \ref{fig:UKF sim}a shows the time-averaged root MSE (RMSE) and RCRLB for state estimation for forward and inverse EKF and UKF (IEKF-1 and IUKF-1), averaged over 500 runs. We also consider inverse filters, which assume a forward filter that is not the same as the true forward filter. In particular, IEKF-2 and IUKF-2, respectively, assume the forward filter to be EKF and UKF when the true forward filters are UKF and EKF, respectively. As compared to forward EKF, forward UKF has a higher estimation error for the FM demodulator system. Note that EKF and UKF's performance also depends on the system itself such that UKF may not necessarily provide better estimates. Although IUKF's error is higher than that of IEKF, IUKF estimates its state more accurately than forward UKF. On the contrary, IEKF's error is more than its corresponding forward filter. 

Interestingly, while IEKF's performance is adversely affected by the incorrect forward filter assumption (IEKF-2 case), IUKF-2 has a smaller estimation error than all other inverse filters. This suggests that the unscented transform approach is more robust to the (non-linear) inverse filter's dynamics than the linearization used in IEKF. We further conclude that assuming a forward UKF and using IUKF may provide estimation accuracy better than IEKF. Note that IUKF-2 outperforms IUKF-1 as well because the former's forward EKF is estimating the state more accurately than the latter's forward UKF. 
Furthermore, assuming forward UKF's $\kappa$ in IUKF to be different from its true value does not affect the estimation performance. The forward and inverse filters are compared only to highlight relative estimation accuracy. 

\noindent\textbf{Vehicle reentry:} 
Consider tracking reentry of a vehicle by a radar using range and bearing measurements. The vehicle reentry problem is used widely to illustrate UKF's performance \cite{julier1995new,sarkka2007unscented,lu2011two}. With $[\mathbf{x}_{k}]_{1}$ and $[\mathbf{x}_{k}]_{2}$ representing the position of the vehicle at $k$-th time instant, $[\mathbf{x}_{k}]_{3}$ and $[\mathbf{x}_{k}]_{4}$ denoting its velocity, and $[\mathbf{x}_{k}]_{5}$ its constant aerodynamic parameter, the continuous-time evolution of the vehicle's state follows $[\dot{\mathbf{x}}_{k}]_{1}=[\mathbf{x}_{k}]_{3}$, $[\dot{\mathbf{x}}_{k}]_{2}=[\mathbf{x}_{k}]_{4}$, $[\dot{\mathbf{x}}_{k}]_{3}=d_{k}[\mathbf{x}_{k}]_{3}+g_{k}[\mathbf{x}_{k}]_{1}+w_{1}$, $[\dot{\mathbf{x}}_{k}]_{4}=d_{k}[\mathbf{x}_{k}]_{4}+g_{k}[\mathbf{x}_{k}]_{2}+w_{2}$, $[\dot{\mathbf{x}}_{k}]_{5}=w_{3}$, where $[\dot{\mathbf{x}}_{k}]_{i}$ is the first-order partial derivative of $[\mathbf{x}_{k}]_{i}$ with respect to time, and $w_{1}, w_{2}$ and $w_{3}$ represent the zero-mean Gaussian process noise \cite{julier1995new}. We consider the discretized version of this system with a time step of $0.1$ sec in our experiment. The quantities $d_{k}=\beta_{k}\exp{(\left(\rho_{0}-\rho_{k})/h_{0}\right)}V_{k}$ and $g_{k}=-Gm_{0}\rho_{k}^{-3}$ where $\beta_{k}=\beta_{0}\exp{([\mathbf{x}_{k}]_{5})}$, $V_{k}=\sqrt{[\mathbf{x}_{k}]_{3}^{2}+[\mathbf{x}_{k}]_{4}^{2}}$ and $\rho_{k}=\sqrt{[\mathbf{x}_{k}]_{1}^{2}+[\mathbf{x}_{k}]_{2}^{2}}$ with $\rho_{0}$, $h_{0}$, $G$, $m_{0}$ and $\beta_{0}$ as constants. The radar's range and bearing measurements are
    $[\mathbf{y}_{k}]_{1}=\sqrt{([\mathbf{x}_{k}]_{1}-\rho_{0})^{2}+[\mathbf{x}_{k}]_{2}^{2}}+v_{1}$, and 
    $[\mathbf{y}_{k}]_{2}=\tan^{-1}{\left(\frac{[\mathbf{x}_{k}]_{2}}{[\mathbf{x}_{k}]_{1}-\rho_{0}}\right)}+v_{2}$, 
where $v_{1}$ and $v_{2}$ represent the zero-mean measurement noise \cite{lu2011two}. 

For the inverse filter, we consider a linear observation 
    $\mathbf{a}_{k}=\left[[\hat{\mathbf{x}}_{k}]_{1},[\hat{\mathbf{x}}_{k}]_{2}\right]^{T}+\bm{\epsilon}_{k}$, 
where $\bm{\epsilon}_{k}\sim\mathcal{N}(\mathbf{0},3\mathbf{I}_{2})$. The initial state was $\mathbf{x}_{0}=[6500.4,349.14,-1.8093,-6.7967,0.6932]^{T}$. The initial state estimate $\Hat{\Hat{\mathbf{x}}}_{k}$ for IUKF was set to actual $\mathbf{x}_{0}$ with initial covariance estimate $\overline{\bm{\Sigma}}_{0}=diag(10^{-5},10^{-5},10^{-5},10^{-5},1)$. For forward UKF, $\kappa$ was chosen as $2.5$ such that the weight for $0$-th sigma point at $\hat{\mathbf{x}}_{k}$ is $1/3$ and all other sigma points have equal weights. Similarly, $\overline{\kappa}$ of IUKF was set to $3.5$. 

Fig. \ref{fig:UKF sim}b shows the (root) time-averaged error (over 100 runs) in position estimation and its RCRLB (also, time-averaged) for forward and inverse UKF (IUKF-1), including forward EKF and IUKF-2 (which incorrectly assumes the forward filter to be UKF when the adversary's actual forward filter is EKF). 
Here, the RCRLB is computed as $\sqrt{[\mathbf{J}^{-1}]_{1,1}+[\mathbf{J}^{-1}]_{2,2}}$ with $\mathbf{J}$ as the corresponding information matrix. 
The IUKF's error is lower than that of forward UKF, as is the case with their corresponding RCRLBs. Further, incorrect forward filter assumption (IUKF-2 case) does not affect the IUKF's estimation accuracy because forward UKF and EKF have similar estimation error. 
For the vehicle re-entry example, the IEKF's error (IEKF-1 and IEKF-2 cases of FM demodulation
)  was similar to IUKF and hence, omitted in Fig. \ref{fig:UKF sim}b. 

\section{Summary}
\label{sec:summary}
With the proposed IUKF, the defender is able to learn an estimate of the adversary’s inference by observing the latter’s actions. The IUKF is guaranteed to be stochastically stable if the forward UKF is stable under mild conditions on the system model. The proposed IUKF assumes that the adversary employs a forward UKF. However, our numerical experiments suggest that IUKF provides reasonable estimates and outperforms IEKF even when the former assumes an incorrect forward filter. When the system dynamics are not known to both agents, then this information may be learned through observations. In particular, RKHS-based function approximation coupled with EKF was employed to learn the unknown system parameters in \cite{singh2022inverse_part2}.

\bibliographystyle{IEEEtran}
\bibliography{main}
\end{document}